\numberwithin{equation}{section}
\theoremstyle{definition}
\newtheorem{theorem}[equation]{Theorem}
\newtheorem{remark}[equation]{Remark}
\newtheorem{corollary}[equation]{Corollary}
\newtheorem{definition}[equation]{Definition}
\newtheorem{lemma}[equation]{Lemma}
\newcommand{\Z}{\mathbb{Z}}
\begin{document}

\title[Cobordism of maps on $\Z_2$-Witt spaces]{Cobordism of maps on $\Z_2$-Witt spaces}
\thanks {
The first named author is partially supported by  USP-COFECUB and by the
Projeto Pesquisador Visitante Especial do Programa Ci\^encia sem Fronteiras,
Processo 400580/2012-8, all other named authors are partially supported by
CAPES, CNPq and FAPESP}
\author{J.-P. Brasselet}
\address{I.M.L.,  Aix-Marseille University, France.}
%\thanks{
%%
\author{A. K. M. Libardi}
\address{IGCE-UNESP, Rio Claro, S.P. Brasil.}
%%\thanks{nao posso esquecer de colocar o tematico de Topologia}
%%
\author{E. C. Rizziolli}
\address{IGCE-UNESP, Rio Claro, S.P. Brasil.}
%\thanks{here}
%%
\author{M. J. Saia}
\address{ICMC-USP, S\~ao Carlos, S.P. Brasil.}
%\thanks{here}

\subjclass[2010]{Primary 57Q20; Secondary 55N33}

\begin{abstract} In this article we study  the bordism groups of normally nonsingular  maps $f: X \to Y$ defined on pseudomanifolds  $X$ and $Y$.  To characterize 
{the bordism of such maps, inspired by  the formula} 
given by Stong,   we  give a general definition of Stiefel-Whitney numbers defined on $X$ and $Y$ using the Wu classes defined by Goresky and Pardon in \cite{Gor-Pardon} and 
{we show} that in several cases the cobordism class of 
{a normally} nonsingular map   $f:X \to Y$  guarantees that these numbers are zero.
\end{abstract}

\maketitle \thispagestyle{empty}

 \section{Introduction }
 The ambiental bordism of manifolds was presented by  Thom in \cite{Thom}.
 Conner and Floyd  \cite{Conner} extended this theory to bordism of maps
 between closed manifolds and there is a classical work of Stong  \cite{Stong},
where the bordism class of maps between manifolds
 $f: X \to Y$ is characterized
 in terms of so-called  Stiefel-Whitney numbers of $(f,X,Y)$.

 {Concerning  the bordism} on singular varieties, Siegel in \cite{Siegel}  computed the bordism groups
 of $\mathbb Q$-Witt spaces, showing that in non trivial cases they are equal to the
 Witt groups.
 %$W({\mathbb Q})$.
 Pardon  \cite{Pardon} computed the bordism
groups of the  ``Poincar\'e duality spaces" 
{defined}  by Goresky and Siegel in \cite{Gor-Siegel}.

In this article we extend this notion to bordism groups of  normally nonsingular maps $f:X \to Y$  between  pseudomanifolds. For closed smooth manifolds $X$ and $Y$ this definition becomes the Stong's
definition of cobordism of maps $(f,X,Y)$ given in \cite{Stong}.

To characterize 
{the bordism of such maps, inspired by  the formula} 
given by Stong,   we  give a general definition of Stiefel-Whitney numbers defined on $X$ and $Y$ using the Wu classes defined by Goresky and Pardon in \cite{Gor-Pardon} and 
{we show}  that, in several cases the cobordism class of the map $f$ guarantees that these numbers are zero.
{More precisely, we } 
show how to extend the result of Stong in the case of  normally nonsingular maps $f:X \to Y$ in the following situations:
 Firstly we consider
the case  $X$ is a locally orientable $\Z_2$-Witt space of pure dimension $a$ and
$Y $ an $b$-dimensional smooth manifold. Then we consider the case $X$ is an  $a$-dimensional smooth manifold and $Y$ a locally orientable $\Z_2$-Witt space of pure dimension $b$. To conclude,
we consider the general case  where $X$ and $Y$ are locally orientable $\Z_2$-Witt spaces.

\newpage

  \section{Intersection homology}

\subsection{Pseudomanifolds}
\begin{definition}
 A pseudomanifold (without boundary) of dimension $a$ is a compact space $X$ which  is the closure of
the union of  $(a-1)$-dimensional simplices in any triangulation of $ X$, and each $(a - 1)$ simplex is a face
of exactly two $a$-simplices.

 A pseudomanifold (with boundary) of dimension $a$ is a compact space $X$ which  is the closure of
the union of  $(a-1)$-dimensional simplices in any triangulation of $ X$, and each $(a - 1)$ simplex is a face
of either one or two $a$-simplices. The boundary consists of simplices which are faces of only one  $a$-simplex.
\end{definition}

Every pseudomanifold admits a pieciewise linear (P.L. for short) stratification, which is a filtration by closed subspaces $\emptyset \subset X_0 \subset X_1 \subset  \ldots \subset X_{a-2}  \subset X_a = X$, such that for each point $x \in X_i-X_{i-1}$ there is a neighborhood $U$ and a P.L. stratum preserving homeomorphism between  $U$ and ${\mathbb R}^{a-i} \times  {\rm C}(L)$, where $L$ is the link
of the stratum $X_i-X_{i-1}$ and {\rm C}$(L)$ denotes the cone on $L$. Thus, if $X_i -X_{i-1}$ is non empty, it is a
(non necessarily connected) manifold of dimension $i$, and is called the
$i$-dimensional stratum of the stratification.

The singular part,  denoted by $\Sigma X$,  is contained in the element $X_{a-2} $ of the filtration.

%{\bf ???  A pseudomanifold $X$ is orientable if its nonsingular part  $X-\Sigma(X)$ is an orientable manifold.}

 %{ \bf \begin{definition}
 %An orientable  $n$-dimensional pseudomanifold $X$  is normal if $H_n(X,X-x; \mathbb Z) \simeq {\mathbb Z}$ for any $x %\in X$.
%\end{definition}

%\begin{definition}
 %A normalization of an $n$-dimensional pseudomanifold $X$  is a normal pseudomanifold $\tilde X$ together with a %finite projection $\pi : {\tilde X} \to X$ such that for any
 %$p\in X$, $$\pi_{\ast} : \bigoplus_{q \in \pi^{-1}(p)} H_n({\tilde X},{\tilde X}-q;\mathbb Z) \to H_n(X,X-p;\mathbb %Z)$$ is an isomorphism.
% \end{definition} ???}

\begin{definition} \cite{FM} A map  $f:X \to Y$ between   pseudomanifolds 
%of dimensions $a$ and $b$ respectively
 is normally nonsingular   if there exists a diagram
$$
\xymatrix{
 N \ar[d]_{\pi} \ar@{^{(}->}[r]^i  & Y \times \Bbb R^k \ar[d]^{p}\\
 X \ar@/_/[u]_{s} \ar[r]^{f}& Y   },$$
where $\pi:N \longrightarrow X$ is a vector bundle with zero-section $s$, $i$ is an open embedding,
$p$ is the   first
projection and one has $f = p \circ i \circ s$.  The bundle  $N = N_f$ is called the normal bundle.
\end{definition}

 \subsection{Intersection Homology and Cohomology}

All homology and cohomology groups will be considered with ${\mathbb Z}_2$ coefficients. Reference for this section is Goresky-MacPherson original paper \cite{GM1}.

The notion of perversity is fundamental for the definition of  intersection homology and cohomology. 
 A perversity $\overline p$ is a multi-index sequence of integers $(p(2),p(3), \ldots )$ such that $p(2)=0$ and $p(c) \leq p(c+1) \leq p(c)+1$, for $c\ge 2$. Any perversity $\overline p$ lies
between the zero perversity  ${\overline 0}=(0,0,0 , \ldots )$ and the total perversity ${\overline t}=(0,1,2,3,\ldots)$. In particular,
we will use the lower middle perversity, denoted ${\overline m}$ and the upper middle perversity, denoted ${\overline n}$,  such that
$${\overline m}(c) = \left[ \frac{c-2}{2} \right] \quad \text{ and } \quad {\overline n}(c) = \left[ \frac{c-1}{2} \right],
\qquad \text{ for } c\ge 2.$$

Let $X$ be an $a$-dimensional
pseudomanifold  and $\overline p$ a perversity.
The intersection homology groups with ${\mathbb Z}_2$ coefficients, denoted  $IH_i^{\overline p}(X)$,
are the homology groups of the chain complex
$$IC_i^{\overline p}(X)=\left \{\xi \in C_i(X) \ | \  \begin{array}{ccc}
  \dim (|\xi | \cap X_{a-c}) & \leq & i-c+p(c) \ {\rm and } \\
   \dim (|\partial \xi | \cap X_{a-c}) & \leq & i-1 -c+p(c)
\end{array} \right \}, $$ where $C_i(X)$ denotes the group of   compact $i$-dimensional P.L. chains
$\xi$ of $X$ with ${\mathbb Z}_2$ coefficients and $\vert \xi \vert$ denotes the support of $\xi$.

In fact $C_{\ast}(X)$ is the direct limit $\displaystyle \lim_\to C_{\ast}^{\mathcal T}(X)$, where $C_{\ast}^{\mathcal T}(X)$ is the simplicial chain complex with respect to a
triangulation $\mathcal T$ and the direct limit is taken with respect to subdivision within the family of triangulations of $X$ compatible with the filtration of $X$.

The intersection cohomology groups with ${\mathbb Z}_2$ coefficients, denoted $IH^{a-i}_{\overline p}(X)$, are defined as the groups of the cochain complex
$$IC^{a-i}_{\overline p} (X) = \left \{\gamma \in C^{a-i}(X) \ | \  \begin{array}{ccc}
  \dim (|\gamma | \cap X_{a-c}) & \leq & i-c+p(c) \ {\rm and } \\
   \dim (|\partial \gamma | \cap X_{a-c}) & \leq & i-1 -c+p(c)
\end{array} \right \}, $$ where $C^{a-i}(X)$ denotes the abelian group, with ${\mathbb Z}_2$ coefficients, of all
$(a-i)$-dimensional P.L. cochains of $X$ with closed supports in $X$.

The main properties of intersection homology that we will use are the following:

For any perversity $\bar{p}$, the Poincar\' e map PD, cap-product by the fundamental class
of $X$  naturally  factorizes in the following way
\cite{GM1}:
$$\xymatrix{
H^{a-i}(X) \ar[rr]^{ PD } \ar[rd]^{\alpha_X}
&& H_{i} (X) \\
&IH_{i}^{\bar{p}}(X). \ar[ru]^{\omega_X}&
  }$$
  where $\alpha_X$ is induced by the cap-product by the fundamental class $[X]$ and 
  $\omega_X$ is induced by the inclusion $IC_i^{\overline p}(X) \hookrightarrow C_i(X)$.

%We remember that the cohomology roups $H^{a-i}(X)$ and the homology groups $H_{i} (X)$ 
% are  considered with$\mathbb{Z}_2$ coefficients.

For perversities $\bar{p}$ and $\bar{r}$ such that $\bar{p}+\bar{r} \le \bar{t}$, the intersection product
$$IH_{i}^{\bar{p}}(X) \times IH_{j}^{\bar{r}}(X) \to
IH_{(i+j)-a}^{\bar{p}+\bar{r}}(X)
$$
is well defined.

%have natural  homomorphisms $\alpha_X : H^{i}(X;\mathbb{Z}_2) \to IH^{i}_{\bar{p}}(X;\mathbb{Z}_2)$
%and $\textcolor{red}{\omega_{X}}   : IH_{i}^{\bar{p}}(X;\mathbb{Z}_2) \to H_{i}(X;\mathbb{Z}_2) $.
%If $\bar{p}+\bar{r} \leq \bar{t}, \ (\bar{t}(c)=c-2)$ then there are cup products
%$$IH^{i}_{\bar{p}}(X;\mathbb{Z}_2) \times IH^{j}_{\bar{r}}(X;\mathbb{Z}_2) \to
% IH^{i+j}_{\bar{p}+\bar{r}}(X;\mathbb{Z}_2)$$ and cap products
%$$IH^{i}_{\bar{p}}(X;\mathbb{Z}_2) \times IH_{j}^{\bar{r}}(X;\mathbb{Z}_2) \to
%IH_{j-i}^{\bar{p}+\bar{r}}(X;\mathbb{Z}_2).$$

The natural homomorphism $IH^{a-i}_{\bar{p}}(X) \to IH_{i}^{\bar{p}}(X)$, cap-product by the fundamental class $[X]$,  is an isomorphism.

\section{Witt spaces and Wu classes}
In this section we use definitions and notations of M. Goresky  \cite{Goresky} and M. Goresky and W. Pardon \cite{Gor-Pardon}. First of all, let us 
{fix  notations} in the smooth case.

Let $X$ be an $a$-dimensional manifold. We will denote by $w^i (X) \in H^i (X)$
the Stiefel-Whitney
cohomology classes  (S-W cohomology classes) of the tangent bundle $TX$. The Stiefel-Whitney
homology classes  (S-W homology classes) of $TX$ denoted by $w_{a-i} (X) \in H_{a-i} (X)$  are their images by Poincar\'e duality. Let $i : X \hookrightarrow V$ be the inclusion of
differentiable manifolds, then one has the naturality formula $i^*(w^i (V)) = w^i (X)$.
\medskip

In the singular case, the {\it Steenrod square operations} are defined in intersection cohomology by M. Goresky {\cite[\S 3.4]{Goresky} } as follows:

\begin{definition} Let $X$ be an $a$-dimensional pseudomanifold. Suppose   $\bar c$ and $\bar d$ are perversities
such that $2{\bar c} \leq {\bar d}$.
For any $i$ with $ 0 \leq i \leq [a/2]$ the ``Steenrod square'' operation
$$Sq^i \colon IH^{j}_{\bar{c}}(X) \to IH^{i+j}_{{\bar{d}}}(X) \to {\mathbb Z}_2$$ is given 
by multiplication with the intersection cohomology $i^{th}$-Wu class of $X$:
$$v^i(X)= v^i_{\bar{d}}(X) \in IH^{i}_{\bar{d}}(X).$$
One defines $v^i(X)=0$, for $i > [a/2]$.
\end{definition}

\begin{definition} (\cite{Gor-Pardon}, Definition 10.1)
A stratified pseudomanifold $X$ is a ${\mathbb Z}_2$-Witt space if for each stratum of odd codimension $2k+1$,  $IH_{k}^{\bar{n}}(L)=0$, where $L$
is the link of the stratum.
\end{definition}

For such spaces, the middle intersection homology group satisfies the Poincar\'e duality over ${\mathbb Z}_2$.

In the following, we will use the notion of {\it locally orientable Witt-space} that we recall:
\begin{definition} (\cite{Gor-Pardon}, Definition 10.2)
A stratified pseudomanifold $X$ is a locally orientable Witt space if it is both locally orientable
and a $\mathbb{Z}_2$-Witt space.
\end{definition}

Let  $X$ be a ${\mathbb Z}_2$-Witt space, then the Wu classes $v^i(X)$ lift canonically to $IH^{i}_{\bar{m}}(X)
= IH^{i}_{\bar{n}}(X) $ (see  \cite{Gor-Pardon} \S 10).
We denote by  $v_{a-i}(X) \in IH^{\bar n}_{a-i}(X)$  the
(homology) $(a-i)^{th}$-Wu class of $X$,
in intersection homology,   dual to the Wu class $v^{i}(X)$
(denoted by $Iv^i \in IH^i(X)$ in \cite{Goresky}).

\begin{definition} \cite{Goresky,Siegel}
One defines the Whitney classes by
$$ IW_{a-i} (X)= \sum_{\ell+j=i} Sq^{\ell}v^{j}(X) \in IH^i_{\bar t} (X) =
H_{a-i}(X).$$
\end{definition}

The pullback of the intersection cohomology Whitney class under a normally nonsingular map
is given by the following theorem (\cite[5.3]{Goresky}):
\begin{theorem}
Let $X$ and $Y$ be ${\mathbb Z}_2$-Witt spaces and $f:X \to Y$  a normally nonsingular map with normal bundle $N_f$. Then one has, in $IH^*(X)$:
$$f^* (IW(Y)) = W(N_f) \cup IW(X)$$
where $W(N_f) $ is the Whitney cohomology class (in $H^*(X)$) of the normal bundle $N_f$.
\end{theorem}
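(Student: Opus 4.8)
The plan is to reduce the identity to the total Wu formula $IW(X)=Sq(v(X))$, which is exactly what the preceding definition says once we package the classes as $v(X)=\sum_j v^j(X)$ and $Sq=\sum_\ell Sq^\ell$. Granting a pullback $f^*$ on intersection cohomology for the normally nonsingular map $f$, together with its naturality against the Steenrod squares, we obtain
\[
f^*(IW(Y))=f^*\bigl(Sq(v(Y))\bigr)=Sq\bigl(f^*v(Y)\bigr),
\]
so the whole theorem reduces to computing the pulled-back total Wu class $f^*v(Y)$.

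Next I would extract the geometric input from the defining diagram. The composite $i\circ s$ embeds $X$ into $Y\times\mathbb{R}^k$ with normal bundle $N_f$ and with $f=p\circ i\circ s$; in the classical smooth reduction this yields $f^*(TY)\oplus(\text{trivial rank }k)\cong TX\oplus N_f$, hence $f^*w(TY)=w(N_f)\cup w(X)$ after passing to Stiefel--Whitney classes. Since $X$ and $Y$ carry no tangent bundle in the singular case, I would replace this by intrinsic reasoning with the Wu classes, using the characterization built into the definition of $Sq^i$, namely $\langle Sq^i x,[X]\rangle=\langle v^i(X)\cup x,[X]\rangle$ for $x\in IH^{a-i}(X)$, together with the analogous pairing on $Y$.

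The central step is to prove
\[
f^*v(Y)=v(X)\cup Sq^{-1}\bigl(W(N_f)\bigr),
\]
where $Sq^{-1}$ is the formal, multiplicative inverse of the total square and the product is the $H^*(X)$-module action on $IH^*(X)$. I would derive this from the Wu characterization by testing both sides against arbitrary intersection cohomology classes: the Gysin/umkehr map $f_!$ attached to the normally nonsingular $f$, the projection formula, the Thom isomorphism for $N_f$, and the identity expressing the Steenrod squares of the Thom class through $W(N_f)$ together force the stated correction factor. Substituting into $Sq(f^*v(Y))$ and applying the Cartan formula $Sq(a\cup b)=Sq(a)\cup Sq(b)$ then gives
\[
Sq\bigl(f^*v(Y)\bigr)=Sq(v(X))\cup Sq\bigl(Sq^{-1}(W(N_f))\bigr)=IW(X)\cup W(N_f),
\]
which is the asserted formula.

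The hard part will be to legitimize the three manifold-style tools — naturality of $Sq$ under $f^*$, the Cartan product formula, and the projection formula together with the Thom isomorphism — inside the intersection homology category, where the cohomology is not a ring and pullbacks exist only for sufficiently nice maps. The normally nonsingular hypothesis is precisely what makes these available: it produces the Thom class and the Gysin map, and it keeps the perversity bookkeeping consistent, since each $Sq^i$ raises perversity from $\bar c$ to $\bar d$ with $2\bar c\le\bar d$ and the Witt condition supplies the middle-perversity Poincar\'e duality that makes the pairings nondegenerate. Verifying that every cup product, pullback and square used above lands in an allowed perversity — so that $IW$ genuinely lives in $IH^{\,i}_{\bar t}(X)=H_{a-i}(X)$ and the formal calculus of total classes is valid — is where the real work lies; the algebra with $Sq$ and $Sq^{-1}$ is then purely formal.
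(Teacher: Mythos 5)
The first thing to say is that the paper contains no proof of this statement: it is quoted as Theorem 5.3 of Goresky \cite{Goresky} and then used as a black box (its only role in the paper is to justify the first equality in the proof of Theorem \ref{TeoXY}). So there is no in-paper argument to compare yours against; what can be judged is whether your outline would reconstruct Goresky's theorem.

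On that score your strategy is the right one, and it is the classical Wu--Atiyah--Hirzebruch argument that Goresky's proof follows in spirit: write $IW=Sq(v)$, establish the Wu-class analogue $f^*v(Y)=v(X)\cup Sq^{-1}(W(N_f))$ by pairing both sides against arbitrary classes (using $f_!$, the projection formula, the Thom isomorphism, and $Sq(U)=W(N_f)\cup U$ for the Thom class $U$), then apply the total square and the Cartan formula; the formal calculus with $Sq^{-1}$ is sound because the total square is a unipotent ring endomorphism mod $2$, so its inverse exists and is multiplicative. But as a proof, your proposal reduces the theorem to three statements that you explicitly defer, and these are exactly the mathematical content: (i) existence of $f^*$ on intersection cohomology for normally nonsingular $f$ and naturality of the squares $Sq^i\colon IH^{j}_{\bar c}\to IH^{i+j}_{\bar d}$ under it; (ii) the mixed Cartan formula $Sq(a\cup x)=Sq(a)\cup Sq(x)$ for $a\in H^*(X)$ and $x\in IH^*(X)$, with the perversity shifts $2\bar c\le \bar d$ under control --- note that $IH^*$ is not a ring, so only this module form is meaningful, and your computation uses it both on $X$ and inside the pairing; (iii) the Gysin map together with the Riemann--Roch-type identity $Sq(f_!x)=f_!\bigl(W(N_f)\cup Sq(x)\bigr)$, plus nondegeneracy of the middle-perversity pairing, which is where the $\Z_2$-Witt hypothesis is indispensable: without it, equality of all pairings does not yield equality of classes, and the Wu classes do not even lift to perversity $\bar m$. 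Items (i)--(iii) are theorems of Goresky's paper, not formalities, and your ``central step'' is equivalent to the theorem itself under the same formal calculus, so all the weight rests on the duality/testing argument that you sketch but do not carry out. In short: correct skeleton of the known proof, but not a self-contained proof.
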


The inclusion $j: X \hookrightarrow V$ provides an unique morphism
$ j^* : IH_{\bar q}^{n-i}(V) \to  IH_{\bar q}^{n-i}(X)$ (see \cite{BBFGK} \S (3.4)).
The result comes from the commutative diagram
$$\xymatrix{
  IH_{\bar q}^{n-i}(X) &  IH_{\bar q}^{n-i}(V) \ar[l]^{j^*} \\
IH_{i}^{\bar{p}}(X)   \ar[u]_{\cong} & IH_{i+1}^{\bar{p}}(V)  \ar[l]^{j^*_X}  \ar[u]^{\cong} }$$
where the bottom map $j^*_X$ is defined by the upper one. We have:

\begin{corollary}\label{equ3} 
Let us consider the inclusion $j: X \hookrightarrow V$ of the ${\mathbb Z}_2$-Witt space  $X$ in a
${\mathbb Z}_2$-Witt space $V$ such that $\Sigma V \subset X$, 
so that the normal bundle $N_i$
is trivial. Then one has:
$$
j^*_X (v_{i+1}(V)) = v_i (X).
$$
\end{corollary}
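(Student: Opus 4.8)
The plan is to reduce the homology statement to the corresponding naturality statement for the \emph{cohomology} Wu classes and then transport it through the commutative diagram displayed just before the corollary. Write $n=\dim X$, so that $\dim V=n+1$: the codimension is one, since in the diagram the homology index drops by exactly one ($i+1\mapsto i$) while the cohomology degree $n-i$ is preserved. By definition the homology Wu classes in play are Poincar\'e dual, in the same cohomological degree $k=n-i$, to cohomology Wu classes: $v_{i+1}(V)$ is dual to $v^{\,n-i}(V)\in IH^{\,n-i}(V)$ and $v_i(X)$ is dual to $v^{\,n-i}(X)\in IH^{\,n-i}(X)$. The vertical arrows of the diagram are precisely these Poincar\'e duality isomorphisms (middle-perversity liftings on the Witt spaces), and the bottom arrow $j^*_X$ is \emph{defined} so that the square commutes. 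Hence the corollary is equivalent to the cohomology identity
$$ j^*\big(v^{\,k}(V)\big)=v^{\,k}(X),\qquad k=n-i. $$

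To prove this cohomology identity I would exploit the triviality of the normal bundle of $j$. Since $N_j$ is a trivial line bundle, $W(N_j)=1$, so the pullback formula $f^*(IW(Y))=W(N_f)\cup IW(X)$ of \cite[5.3]{Goresky}, applied to the normally nonsingular inclusion $f=j$, gives $j^*\big(IW(V)\big)=IW(X)$ in $IH^*$. The Whitney class is obtained from the Wu class by the total Steenrod square, $IW=Sq(v)$, which is exactly the grouping $IW_{a-i}=\sum_{\ell+j=i}Sq^{\ell}v^{j}$ of the definition of the Whitney classes; as the total operation $Sq=1+Sq^1+Sq^2+\cdots$ has leading term the identity, it is invertible, so $v=Sq^{-1}(IW)$. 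The Steenrod squares of \cite{Goresky} are natural with respect to the intersection-cohomology pullback $j^*$ built in \cite{BBFGK}, so $j^*$ commutes with $Sq$ and hence with $Sq^{-1}$; combined with $j^*IW(V)=IW(X)$ this yields $j^*v(V)=v(X)$, and in particular the degree-$k$ identity above.

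Transporting back, I apply the Poincar\'e duality isomorphisms of the diagram: dualizing $j^*v^{\,n-i}(V)=v^{\,n-i}(X)$ and using the commutativity of the square that defines $j^*_X$ produces $j^*_X\big(v_{i+1}(V)\big)=v_i(X)$, as asserted. The degree shift $i+1\mapsto i$ is absorbed entirely by the equality $\dim V-\dim X=1$, which is why the right-hand vertical isomorphism pairs $IH^{\,n-i}_{\bar q}(V)$ with $IH^{\bar p}_{i+1}(V)$ while the left one pairs $IH^{\,n-i}_{\bar q}(X)$ with $IH^{\bar p}_{i}(X)$.

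The step I expect to be the main obstacle is the cohomology naturality $j^*v^k(V)=v^k(X)$ together with the bookkeeping that makes it legitimate. One must check that the hypotheses $\Sigma V\subset X$ and local orientability are exactly what guarantee that the restriction $j^*$ on intersection cohomology is defined via \cite{BBFGK} and is compatible with Poincar\'e duality on \emph{both} spaces, so that the middle-perversity liftings of the Wu classes on $X$ and on $V$ correspond under $j^*$. A subtle point is that each Wu class is characterized intrinsically, through the top-degree pairing $Sq^i\colon IH^{j}_{\bar c}\to IH^{i+j}_{\bar d}\to\mathbb{Z}_2$ against a fundamental class, and the classes $[X]$ and $[V]$ have different dimensions; one must therefore verify that the trivialization of $N_j$ identifies the restriction to $X$ of the pairing on $V$ with the pairing on $X$ before the invertibility-of-$Sq$ argument can be invoked. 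An alternative route, bypassing the pullback formula, would be to establish $j^*v^k(V)=v^k(X)$ directly from this characterization and the naturality of $Sq^k$, which is the intersection-homology analogue of the smooth naturality formula $i^*(w^i(V))=w^i(X)$ recalled earlier in the excerpt.
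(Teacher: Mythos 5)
Your proposal takes essentially the same route as the paper: the paper presents this statement precisely as a corollary of Goresky's Theorem 5.3 (the normal bundle being trivial gives $W(N_j)=1$, hence $j^*(IW(V))=IW(X)$) combined with the commutative diagram that defines the homology map $j^*_X$ from the BBFGK restriction $j^*$ via Poincar\'e duality, which is exactly your argument. Your write-up is in fact more detailed than the paper's own justification, which never spells out the Whitney-to-Wu step (inverting the total Steenrod square and using its naturality under $j^*$) nor the perversity bookkeeping for the middle-perversity liftings --- the very points you correctly flag as the delicate ones.
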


\section{Cobordism of maps}

\begin{definition} \label{triomap}

Let  $f:X \to Y$ be a normally nonsingular map between   pseudomanifolds of dimensions $a$ and $b$ respectively. The triple $(f,X,Y)$ bords  if there exist:

%, where $X$ and $Y$ are Whitney stratified%
\begin{enumerate}

\item Pseudomanifolds $V$ and $W$ with dimensions $a+1$ and $b+1$,
respectively, such that  $\partial V= X$ and $\partial W =Y$; $\Sigma V \subset X$ and $\Sigma W \subset Y$.

\item $F: V \to W$ normally nonsingular such that ${F \mid}_X = f$.

\end{enumerate}

We will denote $(f,X, Y) = \partial(F,V,W)$.
\end{definition}

The definition implies that $V \setminus X$ and $W \setminus Y$ are smooth manifolds. If $X$ (resp. $Y$) is
a manifold, then $W$ (resp. $V$) is a manifold with smooth boundary.

%The set of equivalence classes under this relation will be denoted by ${\mathcal N}(m,n)$.

If we consider $X$ and $Y$ closed smooth manifolds, this definition becomes the Stong's definition to cobordism of maps $(f,X,Y)$ in \cite{Stong}. In this case Stong defines the Stieffel-Whitney (S-W for short) numbers   associated to the map $(f,X,Y)$;  these numbers allow to characterize the bordism properties among such maps.  We recover here results described by Stong which are necessary to better understand our main results.

\begin{definition} \cite{Stong}
Let us consider a map $f: X \to Y$, where $X$ and $Y$ are manifolds of dimensions $a$ and $b$, respectively. Define $f^{!}: H^i(X) \to H^{i+b-a}(Y)$ in such a way that for any $\alpha \in H^{i}(X)$,
we define $f^{!}(\alpha): H_{i+b-a}(Y) \to {\mathbb Z}_2$ such that for each
$\beta \in  H_{i+b-a}(Y)$, $$f^{!}(\alpha)(\beta)=\langle f^{\ast}(\tilde{\beta}) \cup \alpha, [X]\rangle \in {\mathbb Z}_2,$$
 where $\tilde{\beta} \in H^{a-i}(Y)$ is the Poincar\'e dual of $\beta$.
\end{definition}

\begin{remark} \label{Atiyah-Hirzebruch} According to Atiyah and Hirzebruch \cite{Atiyah-Hir}, the map $f{^!}$ can be described  in the following way: let us consider $h: X \to {\mathbb S}^s$ an imbedding of $X$ in some $s$-dimensional sphere ${\mathbb S}^s$
{ and $T$  a tubular} neighborhood of $(f\times h)(X)$ in $Y \times {\mathbb S}^s$, then $f^{!}$ is the composition of the maps:
$$ H^i(X) \stackrel{\varphi}{\to } H^{i+s+b-a}(T/ \partial T) \stackrel{c^{\ast}}{\to }H^{i+s+b-a}(Y\times {\mathbb S}^s) \simeq H^{i+b-a}(Y), $$ where $\varphi$ denotes the Thom isomorphism and $c:Y \times {\mathbb S}^s \to T/ \partial T$ is the contraction.
\end{remark}

%\newpage

\section{Main results}

In this section we show how to extend the result in the case of singular spaces and
normally nonsingular
maps $f : X \longrightarrow Y $. Firstly we consider
the case  $X$ is a locally orientable $\Z_2$-Witt space of pure dimension $a$ and
$Y $ a $b$-dimensional smooth manifold. 
Then we consider the case $X$ is an  $a$-dimensional smooth manifold 
{ and $Y$ is a locally} orientable $\Z_2$-Witt space of pure dimension $b$. To conclude,
we consider the general case  where $X$ and $Y$ are locally orientable $\Z_2$-Witt spaces.

\subsection{Case of a map $f : X \longrightarrow Y $, with $Y$ a   smooth manifold.} $ $

Let $f : X \longrightarrow Y $ be a normally nonsingular map, with $X$ a locally orientable $\Z_2$-Witt space of pure dimension $a$ and $Y$ a $b$-dimensional smooth manifold.

\begin{definition}
Let us define the map $f_B:IH_{i}^{\bar{^p}}(X) \to IH_{i}^{\bar{^p}}(Y)$
in such a way that the following diagram commutes
$$\xymatrix{
   H_{i}(X) \ar[r]^{  \ f_{\ast}} & H_{i}(Y) \\
   IH_{i}^{\bar{p}}(X) \ar[u]_{\omega_X} \ar [r]^{f_B}& IH_i^{\bar{p}}(Y) \ar[u]^{\omega_Y}_\simeq
   }$$
   %{\stackrel{\omega_N}}_{\simeq}}$$
{\it i.e.} $f_B = (\omega_{{Y}})^{-1} \circ f_\ast \circ \omega_{X},$ where the map  $\omega_{{Y}}$
is an isomorphism  since $Y$ is smooth.
\end{definition}

We denote by $\widetilde f_B$ the map obtained by composition
$$IH_{i}^{\bar{p}}(X)  \buildrel{\omega_{X}}\over\longrightarrow H_{i}(X)
 \buildrel{f_*}\over\longrightarrow H_{i}(Y)
  \buildrel{PD}\over\longrightarrow H^{b-i}(Y)
$$
with Poincar\'e duality $PD$.

\begin{definition}
For any partition   $\iota = (\iota_1, \ldots, \iota_s)$
and $r$ numbers $u_1, \ldots , u_r$ satisfying
\begin{equation} \label{equ1}
(\iota_1+ \cdots + \iota_s) +u_1+ \cdots + u_r +r(b-a)=b,
\end{equation}
let us denote $ w^{\iota}(Y) =  w^{\iota_1 }(Y) \cdots  w^{\iota_s}(Y)$. 
  The S-W numbers of any triple $(f,X,Y)$ are defined by
  $$ \langle w^{\iota}(Y).{\widetilde f_{B}}(v_{a-{u}_{1}}(X)). \cdots .
  {\widetilde f_{B}}(v_{a-{u}_{r}}(X)),[Y]\rangle. $$
\end{definition}

\begin{theorem} \label{TeoXN}  Let $f : X  \longrightarrow Y  $ be a normally nonsingular map, with $X $ a locally orientable $\Z_2$-Witt space of pure dimension $a$ and
$Y $ a
$b$-dimensional smooth manifold. If $(f,X,Y)$ bords, then for any  partition  $\iota$ and $r$ numbers $u_1, \ldots , u_r$ satisfying (\ref{equ1}), the S-W numbers  $$ \langle w^{\iota}(Y).{\widetilde f_{B}}(v_{a-{u}_{1}}(X)). \cdots .
  {\widetilde f_{B}}(v_{a-{u}_{r}}(X)),[Y]\rangle$$ are zero.
\end{theorem}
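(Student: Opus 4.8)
The plan is to transport Stong's classical cobordism argument to the singular setting, replacing the Stiefel--Whitney homology classes of $X$ by the images under $\widetilde f_B$ of the intersection-homology Wu classes $v_{a-u}(X)$, and using Corollary \ref{equ3} to restrict Wu classes to a boundary. Assume $(f,X,Y)=\partial(F,V,W)$ as in Definition \ref{triomap}. Since $Y$ is a smooth manifold, $W$ is a compact $(b+1)$-manifold with $\partial W=Y$ (as remarked after Definition \ref{triomap}), while $V$ is an $(a+1)$-dimensional locally orientable $\Z_2$-Witt space with $\partial V=X$ and $\Sigma V\subset X$. Because $F|_X=f$, the inclusions $j\colon X\hookrightarrow V$ and $\iota\colon Y\hookrightarrow W$ fit into a commuting square, so that $\iota_*\circ f_*=F_*\circ j_*$ on ordinary homology.

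The strategy is to lift the product $\Pi_Y:=w^{\iota}(Y)\cdot\prod_{k=1}^{r}\widetilde f_B(v_{a-u_k}(X))\in H^{b}(Y)$ to an \emph{absolute} class $\Omega\in H^{b}(W)$ with $\iota^{*}\Omega=\Pi_Y$. Granting such a lift, the number vanishes for the standard reason: the fundamental class of $Y$ bounds in $W$, i.e. $[Y]=\partial[W]$ and $\iota_*\circ\partial=0$ in the long exact sequence of $(W,Y)$, so $\iota_*[Y]=0$ and hence $\langle\Pi_Y,[Y]\rangle=\langle\iota^{*}\Omega,[Y]\rangle=\langle\Omega,\iota_*[Y]\rangle=0$.

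It remains to lift each factor. For the smooth classes one has $TW|_Y\cong TY\oplus(\text{trivial line})$, so the naturality formula recalled in the smooth case gives $\iota^{*}w^{\iota}(W)=w^{\iota}(Y)$. For each Wu factor, Corollary \ref{equ3} yields $v_{a-u_k}(X)=j^{*}_{X}\bigl(v_{(a+1)-u_k}(V)\bigr)$. The crucial homological input is that $\iota_*f_*\omega_X(v_{a-u_k}(X))=F_*j_*\omega_X(v_{a-u_k}(X))=0$ in $H_{a-u_k}(W)$; granting this, exactness of the sequence of $(W,Y)$ produces $\zeta_k\in H_{(a+1)-u_k}(W,Y)$ with $\partial\zeta_k=f_*\omega_X(v_{a-u_k}(X))$, and Lefschetz duality on $W$ gives $\Omega_k\in H^{(b-a)+u_k}(W)$ with $\Omega_k\cap[W]=\zeta_k$. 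Compatibility of Lefschetz duality with the boundary (the duality ladder between the two long exact sequences) then shows $(\iota^{*}\Omega_k)\cap[Y]=\partial(\Omega_k\cap[W])=f_*\omega_X(v_{a-u_k}(X))$, and since $\cap[Y]$ is injective this identifies $\iota^{*}\Omega_k=\widetilde f_B(v_{a-u_k}(X))$. Setting $\Omega:=w^{\iota}(W)\cdot\prod_k\Omega_k$, its degree is $(\iota_1+\cdots+\iota_s)+\sum_k u_k+r(b-a)=b$ by $(\ref{equ1})$, so $\Omega\in H^{b}(W)$ and $\iota^{*}\Omega=\Pi_Y$, as required.

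The main obstacle is the vanishing $j_*\omega_X(v_{a-u_k}(X))=0$ in $H_{a-u_k}(V)$, which mirrors the smooth identity $j_*\bigl(w^{u_k}(X)\cap[X]\bigr)=w^{u_k}(\cdot)\cap j_*[X]=0$. I would deduce it from Corollary \ref{equ3} together with an intersection-homology projection formula: writing $v_{a-u_k}(X)=v^{u_k}(X)\cap[X]$ for the fundamental class $[X]\in IH_a(X)$ and using that $j^{*}_{X}$ is the boundary-restriction map dual to $j_*$, the class $\omega_X(v_{a-u_k}(X))$ should lie in the image of the connecting map $\partial\colon H_{(a+1)-u_k}(V,X)\to H_{a-u_k}(X)$, on which $j_*$ vanishes. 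The delicate point is commuting the comparison map $\omega$ past the intersection-homology restriction $j^{*}_{X}$ and making the projection formula precise in intersection homology; this is where I expect the real work to lie.
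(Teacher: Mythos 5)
Your global strategy is the same as the paper's: realize the degree-$b$ product on $Y$ as the restriction of an absolute class $\Omega\in H^{b}(W)$, then kill the pairing because $[Y]=\partial[W]$ (your formulation $\iota_*[Y]=0$ is the adjoint of the paper's $\delta\circ j^{*}=0$ in the long exact sequence of $(W,\partial W)$; these are interchangeable). The difference, and the problem, lies in how you produce the lift of each factor $\widetilde f_B(v_{a-u_k}(X))$. You construct $\Omega_k$ abstractly from the long exact sequence of $(W,Y)$ plus Lefschetz duality, and this only gets off the ground if $\iota_*f_*\omega_X(v_{a-u_k}(X))=F_*j_*\omega_X(v_{a-u_k}(X))=0$ in $H_{a-u_k}(W)$, which you propose to deduce from $j_*\omega_X(v_{a-u_k}(X))=0$ in $H_{a-u_k}(V)$. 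That vanishing is precisely the hard point in the singular setting, and you do not prove it: your sketch (an intersection-homology projection formula, ``commuting $\omega$ past $j^{*}_X$'') is the actual mathematical content, as you yourself flag. The smooth-case argument you are mirroring uses that $\cap[X]$ intertwines restriction with the connecting map of $(V,X)$, but here $V$ is singular along $\Sigma V\subset X$, $\omega_X$ is not an isomorphism, and no such compatibility is available off the shelf. So, as written, the proof is incomplete at its crucial step.

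The paper closes exactly this gap by a simpler mechanism that needs no vanishing of pushed-forward classes in $H_*(V)$ or $H_*(W)$: it defines $\widetilde F_B$ on $V$ in the same way as $\widetilde f_B$, and combines Corollary \ref{equ3}, namely $v_{a-u_k}(X)=j^{*}_X\bigl(v_{a+1-u_k}(V)\bigr)$, with the commutativity $\widetilde f_B\circ j^{*}_X=j^{*}\circ\widetilde F_B$ (the large diagram in the paper's proof) to get the \emph{explicit} lift $\Omega_k=\widetilde F_B\bigl(v_{a+1-u_k}(V)\bigr)\in H^{b-a+u_k}(W)$, i.e. $\widetilde f_B(v_{a-u_k}(X))=j^{*}\widetilde F_B\bigl(v_{a+1-u_k}(V)\bigr)$; after that, only the formal identity $\delta\circ j^{*}=0$ is needed. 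Note moreover that the compatibility you call the ``delicate point'' is essentially equivalent to this diagram commutativity: once one knows $\widetilde f_B(v_{a-u_k}(X))=j^{*}\widetilde F_B\bigl(v_{a+1-u_k}(V)\bigr)$, capping with $[Y]$ and the duality ladder show that $f_*\omega_X(v_{a-u_k}(X))$ lies in the image of $\partial\colon H_{a+1-u_k}(W,Y)\to H_{a-u_k}(Y)$, which gives your vanishing. So the fix is to use the explicit lift via $\widetilde F_B$ and Corollary \ref{equ3} directly, and drop the long-exact-sequence detour, which buys nothing once that compatibility is established.
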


\begin{proof} As $(f,X,Y)$ bords, one has $(f,X, Y) = \partial(F,V,W)$. We may   define a map
 $${\widetilde F_B} : IH_{i}^{\bar{p}}(V) \to IH_{i}^{\bar{p}}(W) = H_{i}(W) \to H^{b+1-i}(W)$$
 in the same way that we defined ${\widetilde f_B}$.

 %and by remarking that we also have the long exact sequence in intersection homology groups,
One has:
$$ \langle w^{\iota}(Y).{\widetilde f_{B}}(v_{a-{u}_{1}}(X)). \cdots .
  {\widetilde f_{B}}(v_{a-{u}_{r}}(X)),[Y]\rangle =$$
$$\langle j^{\ast} w^{\iota}(W).j^{\ast}
{\widetilde F_{B}}(v_{a-{u}_{1}}(V)). \cdots . j^{\ast}{\widetilde F_{B}}(v_{a-{u}_{r}}(V)),\partial [W]\rangle,$$
by corollary \ref{equ3} and commutativity of the following diagram:

$$\xymatrix{
   H_{i+1}(V) \ar[rrrr]^{F_*} \ar[rd]^{\tilde j} &&&& H_{i+1}(W)\ar[ld]_{\tilde j}\\
   & H_i(X) \ar[rr]^{f_*} && H_i(Y) \\
   & IH_{i}^{\bar{p}}(X) \ar[u]^{\omega_{X}}  \ar[rr] ^{{\widetilde{f_B}}}&& H^{b-i}(Y) \ar[u]^{PD}_{\cong}\\
    IH_{i+1}(V) \ar[rrrr]^{{\widetilde F_{B}}}  \ar[ru]^{j^*_X}   \ar[uuu]^{\omega_{V}} &&&& H^{b-i}(W).
    \ar[lu]_{j^*} \ar[uuu]^{PD}_{\cong}
 }$$

So, we obtain:
$$\left\langle j^{\ast} \left ( w^{\iota}(W).{\widetilde F_{B}}(v_{a-{u}_{1}}(V)). \cdots .{\widetilde F_{B}}(v_{a-{u}_{r}}(V))\right ),\partial [W]\right\rangle =$$

$$\left\langle \delta j^{\ast} \left ( w^{\iota}(W).{\widetilde F_{B}}(v_{a-{u}_{1}}(V)). \cdots . {\widetilde F_{B}}(v_{a-{u}_{r}}(V))\right ), [W, \partial W]\right\rangle =0,$$
where
$$ H^k(W) \stackrel{j^{\ast} }{\to } H^{k}(Y) \stackrel{\delta}{\to }H^{k+1}(W, \partial W)$$
is part of a long exact sequence, so that $\delta j^{\ast}  = 0$.
\end{proof}

\subsection{Case of a map  $f : X  \longrightarrow Y  $, with $X$ a   smooth manifold} $ $

Let $f : X  \longrightarrow Y  $ be a normally nonsingular map, with $X $ an  $a$-dimensional smooth manifold and $Y $ a locally orientable $\Z_2$-Witt space of pure dimension $b$.

Since $f$ is a normally nonsingular map one may consider  the  normal bundle $N_f$ over $X$, 
and   $i:N_f \to Y \times {\mathbb R}^{s+1}$ an open imbedding. Let  $\widetilde T$ be
 a tubular neighborhood of $(f\times h)(X)$ in $Y \times {\mathbb R}^{s+1}$, where $h: X \to {\mathbb R}^{s+1}$ is defined in such a way that  the following  diagram  commutes.
$$\xymatrix{
  N_f  \ar[r]^{i}  & Y \times {\mathbb R}^{s+1} \\
   X \ar[u]_{\sigma} \ar[ru]_{  f \times h} & } $$

 \bigskip
We denote by  ${\mathbb S}^s$  the $s$-dimensional sphere in ${\mathbb R}^{s+1}$ and by  $T$  the 
intersection  $T = \widetilde T \cap ( Y \times {\mathbb S}^{s})$. 

Following the remark \ref{Atiyah-Hirzebruch}, there exists a map $\phi$ which is the composition of the maps: $$ H^i(X) \stackrel{\varphi}{\to } H^{i+s+b-a}(T/ \partial T) \stackrel{c^{\ast}}{\to }H^{i+s+b-a}(Y\times {\mathbb S}^s) \simeq H^{i+b-a}(Y), $$ here $\varphi$ denotes the Thom homomorphism and $c:Y \times {\mathbb S}^s \to T/ \partial T$ is the contraction. The last isomorphism is given by the $K\ddot{u}nneth formula$ for a product of a smooth manifold with a $Z_2$- Witt space \cite{BBFGK}.

Since $X$ is a smooth manifold, $\alpha_{X}: H^i(X) \to IH^{\bar{p}}_{a-i}(X)$ is an isomorphism, then
one defines the map $f_B$ by commutativity of the following diagram, {\it i.e.}  as being
$f_B = \alpha_{Y} \circ \phi \circ \alpha_{X}^{-1}$
 $$\xymatrix{
H^i(X) \ar[r]^{ \phi} \ar[d]_{\alpha_{X}}^{\cong}       & H^{b-(a-i)}(Y) \ar[d]^{\alpha_Y} \\
 IH^{\bar{p}}_{a-i}(X)  \ar [r]^{f_B}& IH_{a-i}^{\bar{p}}(Y).  }$$
 For any $u$ with $0 \le  u \le b$,
 let $v_{u}(Y) \in IH_{u}^{\bar{m}}(Y)$ the Wu class of $Y$, dual of
 $v^{b-u}(Y) \in IH^{b-u}_{\bar{m}}(Y)$ and $w_{b-u}(X)$
 the homology Whitney class of $X$, so that $f_B (w_{b-u}(X)) \in IH_{b-u}^{\bar{m}}(Y)$.
 For any $u$ with $0 \le  u \le b$ the S-W intersection numbers
  $$v_{u}(Y) \, . \, {f_{B}}(w_{b-u}(X))$$ are well defined.

\begin{theorem} \label{TeoMY}
Let $f : X  \longrightarrow Y  $ be a normally nonsingular map, with $X $ an  $a$-dimensional smooth manifold and $Y $ a locally orientable $\Z_2$-Witt space of pure dimension $b$.
 If $(f,X,Y)$ bords, then for any  $0 \le  u \le b$  the S-W numbers
 $$v_{u}(Y) \, . \, {f_{B}}(w_{b-u}(X))$$ are zero.
\end{theorem}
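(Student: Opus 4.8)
The plan is to mirror the proof of Theorem \ref{TeoXN}, interchanging the roles played by $X$ and $Y$. Since $(f,X,Y)$ bords we write $(f,X,Y)=\partial(F,V,W)$; because $X$ is a smooth manifold, the observation following Definition \ref{triomap} guarantees that $V$ is a smooth manifold with smooth boundary $\partial V=X$, while $W$ is a locally orientable $\Z_2$-Witt space of dimension $b+1$ with $\partial W=Y$ and $\Sigma W\subset Y$. First I would define, exactly as $f_B$ was defined, a map $F_B\colon IH^{\bar p}_{(a+1)-i}(V)\to IH^{\bar p}_{(a+1)-i}(W)$, using that $\alpha_V$ is an isomorphism (as $V$ is smooth) together with the Atiyah--Hirzebruch/Thom description of $F^{!}$ applied to the normal bundle $N_F$.

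The second step is to rewrite each factor of the intersection number as the image, under the boundary restriction $j^{*}_{Y}$, of a corresponding class living on $W$. For the Wu factor this is immediate from Corollary \ref{equ3} applied to the inclusion $j\colon Y\hookrightarrow W$ (where $\Sigma W\subset Y$ forces the normal bundle to be trivial), which gives $j^{*}_{Y}(v_{u+1}(W))=v_u(Y)$. For the pushed-forward Whitney factor I would establish the compatibility $j^{*}_{Y}\bigl(F_B(w_{b+1-u}(V))\bigr)=f_B(w_{b-u}(X))$ from a commutative diagram relating $f_B$ and $F_B$, of the same shape as the large diagram in the proof of Theorem \ref{TeoXN}, together with the fact that, since $X=\partial V$ is a smooth boundary, $TV|_X\cong TX\oplus\underline{\mathbb R}$, so the Stiefel--Whitney classes of $V$ restrict to those of $X$ and the homology Whitney classes are intertwined by the boundary map.

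The third step is to transport the intersection number $v_u(Y)\cdot f_B(w_{b-u}(X))$ on the Witt space $Y$ into a cohomological pairing to which the exact-sequence argument of Theorem \ref{TeoXN} applies verbatim. Using Poincar\'e duality on $Y$ (the isomorphism $IH^{b-j}_{\bar p}(Y)\to IH^{\bar p}_{j}(Y)$), the intersection product is dual to the intersection cup product, and since the homology restriction $j^{*}_{Y}$ corresponds under Poincar\'e duality to the cohomology restriction $j^{*}$ (this is precisely the diagram preceding Corollary \ref{equ3}), the number equals $\langle j^{*}(\Gamma),[Y]\rangle$ for a suitable class $\Gamma\in IH^{*}(W)$ assembled from $v_{u+1}(W)$ and $F_B(w_{b+1-u}(V))$ by the same dualization. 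Finally, since $[Y]=\partial[W]$, I would conclude as before: \[\langle j^{*}(\Gamma),\partial[W]\rangle=\langle \delta\, j^{*}(\Gamma),[W,\partial W]\rangle=0,\] because $\delta\circ j^{*}=0$ is the composition of two consecutive maps in the long exact sequence of the pair $(W,\partial W)$.

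The construction of $F_B$ and the index bookkeeping are routine. I expect the main obstacle to be the second step: making precise the boundary compatibility of the \emph{homology} Whitney class $w_{b-u}(X)$ of the smooth boundary $X=\partial V$ with $w_{b+1-u}(V)$ and checking that it is correctly carried through the Thom-type map by $f_B$ and $F_B$. In Theorem \ref{TeoXN} the corresponding Whitney factor was a cohomology class on a smooth manifold, whose restriction was immediate from naturality, whereas here it is a homology class that must be transported through both the Thom homomorphism and the boundary map. A secondary subtlety is the faithful translation of the intersection product on the singular space $Y$ into the cup-product pairing $\langle-,[Y]\rangle$, so that the vanishing $\delta j^{*}=0$ can be invoked exactly as in the smooth-target case.
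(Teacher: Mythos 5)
Your proposal is correct and follows essentially the same route as the paper: construct $F_B$ on the smooth manifold $V$ via the same Thom/Atiyah--Hirzebruch construction used for $f_B$ (with $V$ embedded in ${\mathbb S}^s$ and a tubular neighborhood $T'$ of $(F\times h)(V)$ in $W\times{\mathbb S}^s$), establish compatibility of $f_B$ and $F_B$ under restriction to the boundary, and conclude with the $\delta\circ j^{\ast}=0$ argument from the long exact sequence of $(W,\partial W)$, exactly as in Theorem \ref{TeoXN}. In fact your write-up is more explicit than the paper's own proof, which after defining $F_B$ and displaying the compatibility diagram simply states that the result follows as in Theorem \ref{TeoXN}; the boundary behaviour of the homology Whitney classes that you flag as the main obstacle is left implicit there.
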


\begin{proof}
  If $(f,X,Y)=\partial(F,V,W)$, one has  $$ H^i(V) \stackrel{\varphi}{\to } H^{i+s+b-a}(T'/ \partial T') \stackrel{c^{\ast}}{\to }H^{i+s+b-a}(W\times {\mathbb S}^s) \simeq H^{i+b-a}(W), $$  where $V$ is embedded in ${\mathbb S}^s$ and $T'$ is a tubular neighborhood of $(F \times h)(V)$, which gives rise to the corresponding map $F_B$. Therefore  we can consider the following diagram, where PD denotes the Poincar\'e duality

$$\xymatrix{
  & H_{a-i}(X) \ar[r]^{  \ f_{\ast}} & H_{a-i}(Y) &\\
   & IH_{a-i}^{\bar{p}}(X) \ar[u]_{\omega_{X}} \ar[r]^{f_B}& IH_{a-i}^{\bar{p}}(Y) \ar[u]^{\omega_{Y}} & \\
 \ar[ruu]^-{\stackrel{PD}{\simeq}} H^{i}(X) \ar[r]^-{\varphi} \ar[ru]^{ \ \ \ \alpha_X}  \ar[r] & H^{i+s+b-a}(T/\partial T) \ar[rr]^-{c^{\ast}} & & H^{i+s+b-a}(Y \times {\mathbb S}^s) \simeq H^{i+b-a}(Y) \ar[lu]_{\alpha_Y}  \ar[luu]_-{PD}}$$
 since we had defined $f_B$ and $F_B$ the result follows in the same way of the proof of  Theorem \ref{TeoXN}.
\end{proof}

\subsection{The general case.} $ $

In the general case  $X$ and $Y$ are locally orientable $\Z_2$-Witt spaces of dimensions $a$ and $b$ respectively. It is  not always possible to define an unique  map $f_B$  as done in the other cases, however we can show that for any map  $\tilde{f}_B$ considered, the bordism condition of $(f,X,Y)$  implies that the corresponding S-W numbers are zero.

First we show the following lemma.

\begin{lemma} \label{lema5.6} Let $f : X \longrightarrow Y $ be a normally nonsingular map and $(f,X, Y) = \partial(F,V,W)$. Given a map $f_B$ there exists a map $F_B$ such that the following diagram  commutes.

$$\xymatrix{
   IH^{\bar{m}}_{u}(X) \ar[d]_{\ f_{B}}  & IH^{\bar{m}}_{u+1}(V)  \ar[l]_{ j^{\ast}_X} \ar[d]_{F_B} \\
   IH^{\bar{m}}_{u}(Y)  & IH^{\bar{m}}_{u+1}(W).  \ar[l]_{j^{\ast}_Y} }$$
\end{lemma}

\begin{proof} The diagram $$\xymatrix{
  X \ar [d]_{f} \ar @{^{(}->}  [rr] ^{j_X} && V  \ar[d]_{F}  \\
  Y   \ar @{^{(}->}  [rr] ^{j_Y}
 && W
  } $$ is a cartesian diagram. Then we can apply Proposition $10.7$ in \cite{Borel} (see also \cite{GM2}).

 One has equality of sheaves on $Y$: $$j^{\ast}_Y F_{!} \mathcal{A} =f_{!}j^{\ast}_X \mathcal{A}$$
 for any sheaf $\mathcal{A}$ on $V$. That provides
a commutative diagram of complexes of sheaves on $Y$
 {(perverse intersection sheaves for the middle perversity $\bar m$)}.

$$\xymatrix{
  f_{!}{\mathcal IC}^\bullet_X  \ar[d]_{}  \ar @{<-}[r]^-{ }
  & f_{!}j^{\ast}_{X} \left( {\mathcal IC}^\bullet_V \right)=j^{\ast}_{Y}F_{!}\left( {\mathcal IC}^\bullet_V \right) \ar[d]_{}  \\
 {{\mathcal IC}^\bullet_Y } \ar @{<-} [r]^-{} &  j^{\ast}_Y\left ({\mathcal IC}^\bullet_W \right )}$$

Let us remind that intersection homology is obtained by taking hypercohomology of the perverse intersection sheaf: $$ IH^{\bar m}_u(Y)=I\!\!H^{b-u}(Y;  {{\mathcal IC}^\bullet_Y } )$$

\medskip

Taking hypercohomology $$I\!\!H^{b-u}(Y;\bullet)$$ of the previous diagram, one obtains:

$$\xymatrix{
  I\!\!H^{b-u}( X; {{\mathcal IC}^\bullet_X } )  \ar[d]_{f_B} \ar @{<-}[r]^-{ j_{X}^{\ast}}
  & I\!\!H^{b-u}( V; {{\mathcal IC}^\bullet_V }) \ar[d]_{F_B} \\
I\!\!H^{b-u}(Y; {{\mathcal IC}^\bullet_Y } ) \ar @{<-} [r]^-{j_Y^{\ast}}& I\!\!H^{b-u}( W; {{\mathcal IC}^\bullet_W }) }$$
 and the Lemma follows.

\end{proof}
%%%%%%%%%%%%%%

\begin{theorem} \label{TeoXY}  Let $f : X \longrightarrow Y $ be a normally nonsingular map, with $X$ and $Y$ locally orientable $\Z_2$-Witt spaces of pure dimension $a$  and $b$ respectively. Then for any $u$ with $0 \le u \le b$, the S-W numbers $\displaystyle \langle v_{u}(Y).f_{B}(v_{b-u}(X)),[Y]\rangle$ are zero.

\end{theorem}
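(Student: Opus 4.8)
The plan is to reproduce the coboundary argument of Theorem \ref{TeoXN}, now carried out entirely inside intersection (co)homology. As in the two previous theorems the operative hypothesis is that $(f,X,Y)$ bords, so I assume $(f,X,Y)=\partial(F,V,W)$ with $\dim V=a+1$, $\dim W=b+1$, $\partial V=X$, $\partial W=Y$, $\Sigma V\subset X$ and $\Sigma W\subset Y$. In particular the normal bundles of the inclusions $j_X:X\hookrightarrow V$ and $j_Y:Y\hookrightarrow W$ are trivial, so the restriction morphisms $j_X^{\ast}$ and $j_Y^{\ast}$ of Corollary \ref{equ3} are available. First I would fix a map $f_B$ and invoke Lemma \ref{lema5.6} to produce an $F_B$ with $f_B\circ j_X^{\ast}=j_Y^{\ast}\circ F_B$ on $IH_{b-u+1}^{\bar m}(V)$.

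Next I would rewrite each of the two Wu classes appearing in the S-W number as a restriction from the bounding space. By Corollary \ref{equ3} applied to $X\hookrightarrow V$ one has $v_{b-u}(X)=j_X^{\ast}\bigl(v_{b-u+1}(V)\bigr)$, and applied to $Y\hookrightarrow W$ one has $v_u(Y)=j_Y^{\ast}\bigl(v_{u+1}(W)\bigr)$. Feeding the first identity through Lemma \ref{lema5.6} gives $f_B(v_{b-u}(X))=j_Y^{\ast}\bigl(F_B(v_{b-u+1}(V))\bigr)$. Hence both factors of the product $v_u(Y)\cdot f_B(v_{b-u}(X))$ lie in the image of $j_Y^{\ast}$, and, using that $j_Y^{\ast}$ is multiplicative for the intersection product, the whole product equals $j_Y^{\ast}\bigl(v_{u+1}(W)\cdot F_B(v_{b-u+1}(V))\bigr)$.

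Finally I would evaluate against the fundamental class, using $[Y]=\partial[W]$. Writing $\Psi=v_{u+1}(W)\cdot F_B(v_{b-u+1}(V))$, the S-W number becomes $\langle j_Y^{\ast}\Psi,\partial[W]\rangle=\langle\delta\,j_Y^{\ast}\Psi,[W,\partial W]\rangle$, where $\delta$ is the connecting homomorphism of the long exact sequence of the pair $(W,\partial W)$. Since $\delta\circ j_Y^{\ast}=0$ in that sequence, the number vanishes, exactly as in Theorem \ref{TeoXN}.

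The main obstacle is justifying the two compatibility statements underlying the middle step: that the intersection-cohomology restriction $j_Y^{\ast}$ is a ring homomorphism for the intersection product, and that the pairing $\langle\,\cdot\,,[Y]\rangle$ together with the boundary relation $[Y]=\partial[W]$ genuinely fits a long exact sequence of the pair in the intersection-(co)homology setting. Because $\Sigma W\subset Y=\partial W$, the interior $W\setminus Y$ is smooth and the singularities sit in the boundary, so one expects the usual relative sequence and the multiplicativity of restriction to survive; but verifying the compatibility of the intersection product with the sheaf-theoretic $j_Y^{\ast}$ of Lemma \ref{lema5.6}, and with Poincar\'e--Lefschetz duality on the Witt pair $(W,Y)$, is the delicate point. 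This is precisely what distinguishes the general case from the two preceding ones, where one of the spaces was smooth and ordinary duality and cup-product functoriality could be used directly.
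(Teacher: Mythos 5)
Your proposal is correct and follows essentially the same route as the paper's proof: restrict both Wu classes from the bounding spaces $V$ and $W$, commute $f_B$ past restriction via Lemma \ref{lema5.6}, pull the intersection product through $j_Y^{\ast}$, and kill the pairing with $[Y]=\partial[W]$ using $\delta\circ j_Y^{\ast}=0$ in the long exact sequence of $(W,\partial W)$. The only difference is presentational: the paper first rewrites everything in intersection cohomology with cup products (citing Goresky's Theorem 5.3 for the Wu-class restriction rather than Corollary \ref{equ3} directly), whereas you stay in homology with the intersection product; the compatibility issues you flag as delicate are equally present, and equally unargued, in the paper's version.
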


\begin{proof}
The diagram of Lemma \ref{lema5.6} can be written in the cohomology setting

$$\xymatrix{
   I  H_{\bar{n}}^{a-u}(X) \ar[r]^{  \ f^{B}} & I  H_{\bar{n}}^{b-u}(Y)  \\
   I  H_{\bar{n}}^{a-u}(V)  \ar[u]_{j^{\ast}_X} \ar[r]^{F^B}& I H_{\bar{n}}^{b-u}(W). \ar[u]_{j^{\ast}_Y} } $$
where $\bar{m}+\bar{n}=\bar{t}$ and we use the same notation for corresponding maps ${j^{\ast}_X}$ and ${j^{\ast}_Y}$.

Let us consider the homology class $v_{b-u}(Y) \in I  H_{b-u}^{\bar n}(Y)$, that will be written $v^u(Y) \in I  H^{u}_{\bar m}(Y)$ in the cohomology setting.

Then $v^u(Y)= j^{\ast}_{Y}v^u(W)$ where $v^u(W) \in I  H^{u}_{\bar m}(W)$ is the corresponding cohomology Wu class  to the homology Wu class  $v_{b+1-u}(W) \in I  H^{\bar n}_{b+1-u}(W)$
of $W$.

\medskip
Let us consider the cohomology Wu class $v^{a-u}(X) \in I H ^{a-u}_{\bar n}(X)$ corresponding for the cohomology {Wu class} $v_{u}(X) \in I H^{\bar m}_{u}(X)$. 
Then  {$v^{a-u}(X)=j^{\ast}_X((v^{a-u}(V))$ }
where $v^{a-u}(V) \in I H^{a-u}_{\bar n}(V)$ is the corresponding cohomology Wu class  to the homology class $v_{u+1}(V) \in I H^{\bar m}_{u+1}(V)$.

One has
$$ f^B(v^{a-u}(X))=f^Bj^{\ast}_X(v^{a-u}(X))\in I H^{b-u}_{\bar n}(Y).$$

The intersection product $$ v_{b-u}(Y)\cdot f_B(v_u(X)) \in IH_{b-u}^{\bar n}(Y)\times I H^{\bar m}_{u}(Y) \to I H^{\bar t}_0(Y)$$
corresponds to the product $$ v^u(Y) \cup f^B(v^{a-u}(X)) \in I H^{u}_{\bar m}(Y) \times I H^{b-u}_{\bar n} (Y) \to I H^{b}_{\bar 0}(Y).$$

One has

 $$\langle v^{u}(Y)\cup f^{B}(v^{a-u}(X)), [Y]\rangle =$$

 $$\langle j^{\ast}_Y v^{u}(W) \cup f^Bj^{\ast}_X(v^{a-u}(V),[Y]\rangle = $$

$$\langle j^{\ast}_Y v^{u}(W) \cup j^{\ast}_YF^{B}(v^{a-u}(V)),[Y]\rangle =$$

$$\langle j^{\ast}_Y\left[v^{u}(W)\cup F^{B}(v^{a-u}(V))\right], [\partial W]\rangle =$$

$$\langle \delta j^{\ast}_Y \left[v^{u}(W) \cup F^{B}(v^{a-u}(V))\right], [W,\partial W]\rangle =0$$
where the first equality is a consequence of the Theorem 5.3 of Goresky \cite{Goresky}, the second one is from Lemma \ref{lema5.6} and the fourth equality is obtained in an analogous way than the proof of Theorem \ref{TeoMY}.
\end{proof}

\end{document}